\documentclass[11pt]{article}
%%%%%%%%%%%%%%%%%%%%%%%%%%%%%%%%%%%%%%%%%%%%%%%%%%%%%%%%%%%%%%%%%%%%%%%%%%%%%%%%%%%%%%%%%%%%%%%%%%%%%%%%%%%%%%%%%%%%%%%%%%%%%%%%%%%%%%%%%%%%%%%%%%%%%%%%%%%%%%%%%%%%%%%%%%%%%%%%%%%%%%%%%%%%%%%%%%%%%%%%%%%%%%%%%%%%%%%%%%%%%%%%%%%%%%%%%%%%%%%%%%%%%%%%%%%%
\usepackage{eurosym}
\usepackage{amssymb}
\usepackage{amsfonts}
\usepackage{amsmath}
\usepackage{graphicx}
\usepackage{hyperref}
\usepackage{subcaption}
\usepackage{cancel}
\usepackage{tikz}
\usepackage{float}
\usepackage{cite}
\usepackage{float}

\setcounter{MaxMatrixCols}{10}
%TCIDATA{OutputFilter=LATEX.DLL}
%TCIDATA{Version=5.50.0.2953}
%TCIDATA{<META NAME="SaveForMode" CONTENT="1">}
%TCIDATA{BibliographyScheme=Manual}
%TCIDATA{Created=Thursday, July 13, 2017 11:46:17}
%TCIDATA{LastRevised=Saturday, June 13, 2020 12:59:48}
%TCIDATA{<META NAME="GraphicsSave" CONTENT="32">}
%TCIDATA{<META NAME="DocumentShell" CONTENT="Standard LaTeX\Blank - Standard LaTeX Article">}
%TCIDATA{CSTFile=LaTeX article (bright).cst}

\newtheorem{theorem}{Theorem}

\newtheorem{lemma}[theorem]{Lemma}

\newtheorem{proposition}[theorem]{Proposition}
\newtheorem{remark}[theorem]{Remark}

\newenvironment{proof}[1][Proof]{\textbf{#1.} }{\ \rule{0.5em}{0.5em}}
\textwidth16.5cm \textheight21cm \voffset -24mm \topmargin2cm
\oddsidemargin+0.1cm \evensidemargin0.1cm

\begin{document}

\author{George Avalos \\
%EndAName
Department of Mathematics, University of Nebraska-Lincoln, USA \and Pelin G. Geredeli \\
%EndAName
Department of Mathematics, Iowa State University, USA }
\title{ A Resolvent Criterion Approach to Strong Decay of a Multilayered Lamé-Heat System
}
\maketitle

\begin{abstract}

We consider a multilayer hyperbolic-parabolic PDE system which constitutes a coupling of 3D thermal - 2D elastic - 3D elastic dynamics, in which the boundary interface coupling between 3D fluid and 3D structure is realized via a 2D elastic equation. Our main result here is one of strong decay for the given multilayered - heat system. That is, the solution to this composite PDE system is stabilized asymptotically to the zero state. 

Our proof of strong stability takes place in the ``frequency domain" and ultimately appeals to the pointwise resolvent condition introduced by Tomilov \cite{Tomilov}. This very useful result, however, requires that the semigroup associated with our multilayered FSI system be completely non-unitary (c.n.u).  Accordingly, we firstly establish that the semigroup $\{e^{\mathcal {A}t}\}_{t\geq 0}$ is indeed c.n.u., in part by invoking relatively recent results of global uniqueness for overdetermined Lamé systems on nonsmooth domains. Although the entire proof also requires higher regularity results for some trace terms, this \textit{``resolvent criterion approach"} allows us to establish a ``classially soft" proof of strong decay. In particular, it avoids the sort of technical PDE multipliers invoked in \cite{jde}.

\vskip.3cm \noindent \textbf{Key terms:} Fluid-Structure Interaction, Lamé-Heat System, Semigroup, Strong Stability 

\end{abstract}

\bigskip 

\section{Introduction}

\subsection{Description of the Problem}

The multi-layered PDE models discussed below arise in the context of fluid–structure interaction with composite structures. Such FSI mathematically account for the fact that mammalian veins and arteries are typically composed of various layers of tissues; each layer will manifest its own intrinsic material properties, and are moreover separated from the others by thin elastic laminae; see \cite{multi-layered}. Consequently, appropriate FSI will contain an additional PDE which evolves on the boundary interface to account for thin elastic layer. \\

\noindent In what follows we describe the setting and explicit description of the PDE system under the study: \\

\noindent Throughout, the fluid geometry $\Omega _{f}$ $\subseteq \mathbb{R}^{3}$ will be a
Lipschitz, bounded domain with exterior boundary $\Gamma _{f}$. The
polyhedral structure domain $\Omega _{s}$ $\subseteq \mathbb{R}^{3}$ will be
\textquotedblleft completely immersed\textquotedblright\ in $\Omega _{f},$
with its polygonal boundary faces denoted $\Gamma _{j},$ $1\leq j\leq K$. If given faces 
$\Gamma _{i}$ and $\Gamma _{j}$ satisfy $\Gamma _{i}\cap \Gamma _{j}\neq
\emptyset $ for $i\neq j$ then the interior dihedral angle between them is
in $(0,2\pi )$ (see Figure.)

\begin{center}
\includegraphics[scale=0.4]{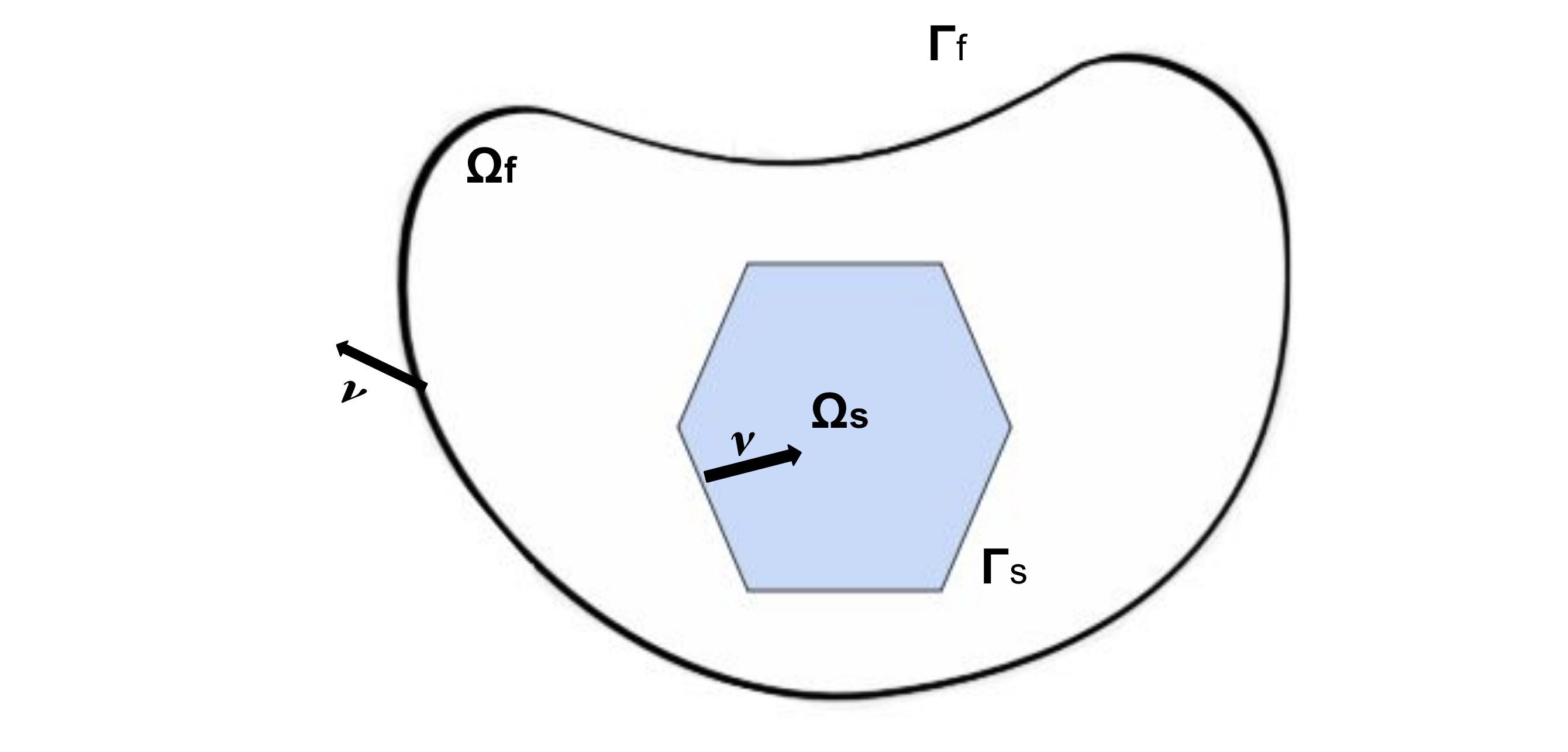} 

\textbf{Figure: Geometry of the FSI Domain} 
\end{center}
The boundary interface $\Gamma _{s}=\partial \Omega _{s}$ between $\Omega
_{f}$ and $\Omega _{s}$ is then the union of said polygonal faces. That is, 
\[
\Gamma _{s}=\cup _{j=1}^{K}\overline{\Gamma }_{j}.
\]
In addition, $\nu (x)$ is the unit normal vector which is outward with
respect to $\Omega _{f},$ and so inwards with respect to $\Omega _{s}.$ The
two dimensional vector $n_{j},$ $1\leq j\leq K,$ will denote the exterior
normal vector with respect to polygonal face $\Gamma _{j}.$ With $\{\Omega
_{s},\Omega _{f}\}$ as given, the PDE system under consideration is as
follows:

\begin{equation}
\left\{ 
\begin{array}{l}
u_{t}-\Delta u=0\text{ \ \ \ in \ }(0,T)\times \Omega _{f} \\ 
u|_{\Gamma _{f}}=0\text{ \ \ \ on \ }(0,T)\times \Gamma _{f};%
\end{array}%
\right.   \label{2a}
\end{equation}

\begin{equation}
\left\{ 
\begin{array}{l}
\frac{\partial ^{2}}{\partial t^{2}}h_{j}-\text{div}\sigma _{\Gamma _{s}}(h_{j})+h_{j}=\nu \cdot \sigma (w)|_{\Gamma _{j}}-\frac{\partial u}{\partial \nu }%
|_{\Gamma _{j}}\text{ \ \ \ on \ }(0,T)\times \Gamma _{j},\text{ \ \ for  }%
1\leq l\leq K \\ 
h_{j}|_{\partial \Gamma _{j}\cap \partial \Gamma _{l}}=h_{l}|_{\partial
\Gamma _{j}\cap \partial \Gamma _{l}}\text{ on \ }(0,T)\times (\partial
\Gamma _{j}\cap \partial \Gamma _{l})\text{, for all }1\leq l\leq K\text{
such that }\partial \Gamma _{j}\cap \partial \Gamma _{l}\neq \emptyset  \\ 
\left. n_{j}\cdot \sigma _{\Gamma _{s}}(h_{j})\right\vert _{\partial \Gamma
_{j}\cap \partial \Gamma _{l}}=-\left. n_{l}\cdot \sigma _{\Gamma
_{s}}(h_{l})\right\vert _{\partial \Gamma _{j}\cap \partial \Gamma _{l}}%
\text{on \ }(0,T)\times (\partial \Gamma _{j}\cap \partial \Gamma _{l})\\
\text{for all }1\leq l\leq K\text{ such that }\partial \Gamma _{j}\cap \partial
\Gamma _{l}\neq \emptyset .\text{\ }%
\end{array}%
\right.   \label{2.5b}
\end{equation}%
\begin{equation}
\left\{ 
\begin{array}{l}
w_{tt}-\text{div}\sigma (w)=0\text{ \ \ \ on \ }(0,T)\times \Omega _{s} \\ 
w_{t}|_{\Gamma _{j}}=\frac{\partial }{\partial t}h_{j}=u|_{\Gamma _{j}}\text{
\ \ \ on \ }(0,T)\times \Gamma _{j}\text{, \ for }j=1,...,K%
\end{array}%
\right.   \label{2d}
\end{equation}%
\begin{equation}
\lbrack u(0),h_{1}(0),\frac{\partial }{\partial t}h_{1}(0),...,h_{K}(0),%
\frac{\partial }{\partial t}%
h_{K}(0),w(0),w_{t}(0)]=[u_{0},h_{01},h_{11},...,h_{0K},h_{1K},w_{0},w_{1}].
\label{IC}
\end{equation}%
Here, the stress tensors $\sigma $ and $\sigma _{\Gamma _{s}}$ constitute
Lam$\acute{e}$ systems of elasticity on their respective ``thick" and ``thin" layers.
Namely,\newline \\
\textbf{i)} For function $v$ in $\Omega _{s},$  
\[
\sigma (v)=2\mu \epsilon (v)+\lambda \lbrack I_{3}\cdot \epsilon (v)]I_{3},
\]%
where strain tensor $\epsilon (\cdot )$ is given by 
\[
\epsilon _{ij}(v)=\frac{1}{2}\left( \frac{\partial v_{j}}{\partial x_{i}}+%
\frac{\partial v_{i}}{\partial x_{j}}\right) ,\text{ \ \ }1\leq i,j\leq 3;
\]%
\textbf{ii)} Likewise, for function $g$ on polygon $\Gamma _{j},$%
\[
\sigma _{\Gamma _{s}}(g)=2\mu _{\Gamma _{s}}\epsilon _{\Gamma
_{s}}(g)+\lambda _{\Gamma _{s}}[I_{2}\cdot \epsilon _{\Gamma _{s}}(g)]I_{2}
\]%
with%
\[
(\epsilon _{\Gamma _{s}})_{ik}(g)=\frac{1}{2}\left( \frac{\partial g_{k}}{%
\partial x_{i}}+\frac{\partial g_{i}}{\partial x_{k}}\right) ,\text{ \ \ }%
1\leq i,k\leq 2.
\]%
where $\{ \mu,\lambda \}$, $\{ \mu_{\Gamma _{s}},\lambda_{\Gamma _{s}} \}$ are respective Lam$\acute{e}$ parameters. We will consider said multi-layered-heat PDE system with initial data (\ref%
{IC}) drawn from the natural finite energy space $\mathbf{H}$, defined as: 
\begin{equation}
\begin{array}{l}
\mathbf{H}=\Big\{[u_{0},h_{01},h_{11},...,h_{0K},h_{1K},w_{0},w_{1}]\in
L^{2}(\Omega _{f})\times H^{1}(\Gamma _{1})\times L^{2}(\Gamma _{1})\times
... \\ 
\text{ \ \ \ \ \ \ \ \ \ \ \ }\times H^{1}(\Gamma _{K})\times L^{2}(\Gamma
_{K})\times H^{1}(\Omega _{s})\times L^{2}(\Omega _{s})\text{, \ such that
for each }1\leq j\leq K: \\ 
\text{\ \ \ \ \ \ \ \ \ \ (i) }w_{0}|_{\Gamma _{j}}=h_{0j}\text{; } \\ 
\text{ \ \ \ \ \ \ \ \ } \text{(ii) }h_{0j}|_{\partial \Gamma _{j}\cap
\partial \Gamma _{l}}=h_{0l}|_{\partial \Gamma _{j}\cap \partial \Gamma _{l}}%
\text{ on \ }\partial \Gamma _{j}\cap \partial \Gamma _{l}\text{, for all }%
1\leq l\leq K\text{ such that }\partial \Gamma _{j}\cap \partial \Gamma
_{l}\neq \emptyset \Big\}. %
\end{array}
\label{H}
\end{equation}%
Here, $\mathbf{H}$ is a Hilbert space with the inner product%
\begin{eqnarray}
(\Phi _{0},\widetilde{\Phi }_{0})_{\mathbf{H}} &=&(u_{0},\widetilde{u}%
_{0})_{\Omega _{f}}+\sum\limits_{j=1}^{K}(\sigma _{\Gamma
_{s}}(h_{0j}),\epsilon _{\Gamma _{s}}(\widetilde{h}_{0j}))_{\Gamma
_{j}}+\sum\limits_{j=1}^{K}(h_{0j},\widetilde{h}_{0j})_{\Gamma _{j}} 
\nonumber \\
&&+\sum\limits_{j=1}^{K}(h_{1j},\widetilde{h}_{1j})_{\Gamma _{j}}+(
\sigma(w_{0}),\epsilon( \widetilde{w}_{0}))_{\Omega _{s}}+(w_{1},\widetilde{w}%
_{1})_{_{\Omega _{s}}},  \label{Hilbert}
\end{eqnarray}%
where%
\begin{equation}
\Phi _{0}=\left[ u_{0},h_{01},h_{11},...,h_{0K},h_{1K},w_{0},w_{1}\right]
\in \mathbf{H}\text{; \ }\widetilde{\Phi }_{0}=\left[ \widetilde{u}_{0},%
\widetilde{h}_{01},\widetilde{h}_{11},...,\widetilde{h}_{0K},\widetilde{h}%
_{1K},\widetilde{w}_{0},\widetilde{w}_{1}\right] \in \mathbf{H}.
\label{stat}
\end{equation}

\subsection{Main Objective and Literature}

\hspace{0.5cm} The PDE model (\ref{2a})-(\ref%
{IC}) is one amongst a class of coupled PDE systems which have been derived, so as to mathematically describe the interaction between viscous blood flow and the multi-layered vessels through which such flow is transported within a given mammalian species; see e.g., \cite{multi-layered, BorisSimplifiedFSI}. (See also the following references which generally deal with the mathematical and/or modeling analysis of coupled (single-layered) structure-fluid PDE systems \cite{A-T, AvalosTriggiani09, Barbu, FSIforBIO, SunnyStentsSIAM, Chambolle, Courtand, Du, GazzolaReview, lions1969quelques, RichterBook}.)

In this work, we consider the strong stability problem; namely that of ascertaining that the thick elastic-thin elastic-thermal solution components tend asymptotically to the zero state, for given finite energy data in $\mathbf{H};$ see Theorem \ref{st} below. In particular, we investigate whether the dissipation which emanates only from the thermal component of the coupled system (\ref{2a})-(\ref%
{IC}), suffices to strongly stabilize the elastic dynamics, notwithstanding the fact that the three distinct PDE components each evolve on their own respective geometries.

We emphasize here that the domain of the associated thick Lam$\acute{e}$-thin Lam$\acute{e}$-heat semigroup generator is \underline{not} compactly embedded into the finite energy space $\mathbf{H}$-- see (\ref{4a}) and (A.i)-(A.iv)  below-- consequently, a conclusion of strong stability here will not follow from classic PDE control arguments, for which it is essentially sufficient (given an underlying compactness of the resolvent of the associated semigroup generator) to establish weak stability; see e.g., \cite{Levan}. With reference to such means, the fundamental example in the literature is the strong stability problem for the boundary damped wave equation on a bounded domain (see \cite{Q-R, trigg, Slem, Lagnese}). Again, what allows for said approach is the fact that the semigroup generator of the boundary damped wave equation has domain which is compactly embedded into $H^1\times L^2.$ In the present situation, this avenue of approach is not available. 

An analogous result of asymptotic decay was obtained in \cite{jde} for a canonical ``thick" wave - ``thin" wave - heat PDE model. Likewise, as in the present situation, the associated multi-layered structure - heat semigroup generator in \cite{jde} does not have compact resolvent. However, in \cite{jde}, with a view of ultimately invoking the wellknown spectral criterion in \cite{A-B} for strong stability, the authors were compelled to invoke a PDE multiplier method (in the frequency domain) so as to derive a wave identity for the thick wave PDE component. (Such wave identities for uncoupled dynamics are of course instrumental in establishing uniform stabilization; see \cite{chen, trigg, Lagnese}.) So in some sense, the partial multiplier approach to strong decay in \cite{jde} resembles that of \cite{Lagnese} for said boundary damped wave equation.

By contrast, we intend in the present work to pursue an approach which eschews the need for deriving analogous energy identities for the thick Lam$\acute{e}$ solution component of the multilayered-thermal system. Certainly, such Lam$\acute{e}$ energy identities exist (although of course they are a bit cumbersome) and have been used in the context of PDE boundary stabilization problems--see; e.g., \cite{Horn} and \cite{AvalosTriggiani09}-- however, since the present issue is one of strong, and not uniform decay, it would seem preferable to find a ``softer" functional approach--somewhat in the spirit of the aforesaid works on boundary damped wave equation strong decay-- at least to the extent possible.

To this end, our strong stability proof here is predicated upon ultimately invoking the \underline{resolvent} criterion in \cite{Tomilov}; see Theorem \ref{C-T} below. Essentially, in order to infer strong decay of finite energy solutions of (\ref{2a})-(\ref%
{IC}), we will show below that the associated thick Lam$\acute{e}$ - thin Lam$\acute{e}$ - heat semigroup generator has (noncompact) resolvent which ``almost everywhere" obeys a certain strong limit with respect to parameter values in the right half complex plane. In order to avail ourselves of Theorem 4, we must as a preliminary step establish that the multilayered structure heat semigroup (besides being a contraction) is also completely non-unitary (c.n.u). In this step, we will need to appeal  to the relatively recent global uniqueness (Holmgren's-type) result for Lam$\acute{e}$ systems of elasticity; see \cite{Eller-Tound}. Moreover, we will need to recall higher regularity results for uncoupled three dimensional Lam$\acute{e}$ systems of elasticity on polyhedra; see \cite{Grisvard}. 

We intend, as future work, to investigate uniform decay properties of (\ref{2a})-(\ref%
{IC})-- probably taking as our point of departure the canonical multilayered structure - heat system in \cite{jde}. Accordingly, we should mention those results of exponential and polynomial decay in the literature for single-layered structure - parabolic PDE models, \cite{RauchZhangZuazua, Duyckaerts, AvalosTriggiani2, Avalos-Trigg, AvalosLasieckaTriggiani16, AG1, A-P2, Las-Leb}.

\subsection{Notation}

For the remainder of the text norms $||\cdot ||_{D}$ are taken to be $%
L^{2}(D)$ for the domain $D$. Inner products in $L^{2}(D)$ is written $%
(\cdot ,\cdot )$, while inner products $L_{2}(\partial D)$ are written $%
\langle \cdot ,\cdot \rangle $. The space $H^{s}(D)$ will denote the Sobolev
space of order $s$, defined on a domain $D$, and $H_{0}^{s}(D)$ denotes the
closure of $C_{0}^{\infty }(D)$ in the $H^{s}(D)$ norm which we denote by $%
\Vert \cdot \Vert _{H^{s}(D)}$ or $\Vert \cdot \Vert _{s,D}$. We make use of
the standard notation for the trace of functions defined on a Lipschitz
domain $D$, i.e. for a scalar function $\phi \in H^{1}(D)$, we denote $%
\gamma (w)$ to be the trace mapping from $H^{1}(D)$ to $H^{1/2}(\partial D)$%
. We will also denote pertinent duality pairings as $(\cdot ,\cdot
)_{X\times X^{\prime }}$.

\section{Preliminaries}

With respect to the above setting, the PDE system given in (\ref{2a})-(\ref%
{IC}) may be associated with an abstract ODE in Hilbert space $\mathbf{H}$.
To wit, the operator $\mathbf{A}:D(\mathbf{A})\subset \mathbf{H}\rightarrow 
\mathbf{H}$ be defined by%
\begin{equation}
\mathbf{A}=\left[ 
\begin{array}{cccccccc}
\Delta  & 0 & 0 & 0 & 0 & 0 & 0 & 0 \\ 
0 & 0 & I & \cdots  & 0 & 0 & 0 & 0 \\ 
-\frac{\partial }{\partial \nu }|_{\Gamma _{1}} & (\text{div}\sigma _{\Gamma
_{s}}-I) & 0 & \cdots  & 0 & 0 & \nu \cdot \sigma (\cdot )|_{\Gamma _{1}} & 0
\\ 
\vdots  & \vdots  & \vdots  & \cdots  & \vdots  & \vdots  & \vdots  & \vdots 
\\ 
0 & 0 & 0 & \cdots  & 0 & I & 0 & 0 \\ 
-\frac{\partial }{\partial \nu }|_{\Gamma _{K}} & 0 & 0 & \cdots  & (\text{%
div}\sigma _{\Gamma _{s}}-I) & 0 & \nu \cdot \sigma (\cdot )|_{\Gamma _{K}}
& 0 \\ 
0 & 0 & 0 & \cdots  & 0 & 0 & 0 & I \\ 
0 & 0 & 0 & \cdots  & 0 & 0 & \text{div}\sigma (\cdot ) & 0%
\end{array}%
\right] ;  \label{4a}
\end{equation}%
\begin{equation}
\begin{array}{l}
D(\mathbf{A})=\Big\{ \left[ u_{0},h_{01},h_{11},\ldots
,h_{0K},h_{1K},w_{0},w_{1}\right] \in \mathbf{H}:  \\ 
\text{ \ \ \textbf{(A.i)} }u_{0}\in H^{1}(\Omega _{f})\text{, }h_{1j}\in
H^{1}(\Gamma _{j})\text{ for }1\leq j\leq K\text{, }w_{1}\in H^{1}(\Omega
_{s})\text{;} \\ 
\text{ \ } \text{\textbf{(A.ii)} \textbf{(a)} }\Delta u_{0}\in L^{2}(\Omega
_{f})\text{, \textbf{(b)} div}\sigma (w_{0})\in L^{2}(\Omega _{s}), \\
\text{\ \ \ \ \ \ \ \ \ \ \  \textbf{(c)} div}%
\sigma _{\Gamma _{s}}(h_{0j})+\nu \cdot \sigma (w_{0})|_{\Gamma _{j}}-\frac{%
\partial u_{0}}{\partial \nu }|_{\Gamma _{j}}\in L^{2}(\Gamma _{j})\text{ \
for \ }1\leq j\leq K\text{;} \\ 
\text{ \ }\left. \text{\textbf{(A.iii)} }u_{0}|_{\Gamma _{f}}=0,\ \
u_{0}|_{\Gamma _{j}}=h_{1j}=w_{1}|_{\Gamma _{j}},\ \text{for }1\leq j\leq K%
\text{;}\right.  \\ 
\text{ \ }\left. \text{\textbf{(A.iv)} For }1\leq j\leq K\text{: }\right. 
\\ 
\text{ \ \ \ \ \ \ \ \ \ \ \textbf{(a)} }h_{1j}|_{\partial \Gamma _{j}\cap \partial \Gamma
_{l}}=h_{1l}|_{\partial \Gamma _{j}\cap \partial \Gamma _{l}}\text{ on \ }%
\partial \Gamma _{j}\cap \partial \Gamma _{l}\text{, for all }1\leq l\leq K%
\text{ such that }\partial \Gamma _{j}\cap \partial \Gamma _{l}\neq
\emptyset ; \\ 
\text{ \ \ \ \ \ \ \ \ \  }\left. \text{\textbf{(b)} } n_{j}\cdot \sigma _{\Gamma
_{s}}(h_{0j})\right\vert _{\partial \Gamma _{j}\cap \partial \Gamma
_{l}}=-\left. n_{l}\cdot \sigma _{\Gamma _{s}}(h_{0l})\right\vert _{\partial
\Gamma _{j}\cap \partial \Gamma _{l}}\text{, for all }1\leq l\leq K\text{
such that }\partial \Gamma _{j}\cap \partial \Gamma _{l}\neq \emptyset
\Big\} .%
\end{array}
\label{dom}
\end{equation}%
\\

\noindent With this matrix, if $\Phi (t)=\left[ u(t),h_{1}(t),\frac{\partial }{%
\partial t}h_{1}(t),...,h_{K}(t),\frac{\partial }{\partial t}%
h_{K}(t),w(t),w_{t}(t)\right] ,$ and\\  $\Phi
_{0}=[u_{0},h_{01},h_{11},...,h_{0K},h_{1K},w_{0},w_{1}]$ then the solution
of (\ref{2a})-(\ref{IC}) may be written simply as%
\begin{equation}
\frac{d}{dt}\Phi (t)=\mathbf{A}\Phi (t)\text{; \ }\Phi (0)=\Phi _{0}.
\label{ODE}
\end{equation}
Proceeding along similar lines of approach as in \cite{jde}, one can obtain the
following result of well posedness:

\begin{theorem}
\label{well}The operator $\mathbf{A}:D(\mathbf{A})\subset \mathbf{H}%
\rightarrow \mathbf{H}$, defined in (\ref{4a})-(\ref{dom}), generates a $%
C_{0}$-semigroup of contractions on $\mathbf{H}$. Consequently, the solution\\ 
$\Phi (t)=\left[ u(t),h_{1}(t),\frac{\partial }{%
\partial t}h_{1}(t),...,h_{K}(t),\frac{\partial }{\partial t}%
h_{K}(t),w(t),w_{t}(t)\right] $ of (\ref{2a})-(\ref{IC}), or
equivalently (\ref{ODE}), is given by 
\[
\Phi (t)=e^{\mathbf{A}t}\Phi _{0}\in C([0,T];\mathbf{H})\text{,}
\]%
where $\Phi _{0}=\left[ u_{0},h_{01},h_{11},...,h_{0K},h_{1K},w_{0},w_{1}%
\right] \in \mathbf{H}$.
\end{theorem}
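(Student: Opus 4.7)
My plan is to apply the Lumer--Phillips theorem: verify that (i) $\mathbf{A}$ is dissipative on $D(\mathbf{A})$, and (ii) the range condition $\mathrm{range}(I-\mathbf{A})=\mathbf{H}$ holds. Together these give that $\mathbf{A}$ generates a contractive $C_{0}$-semigroup on $\mathbf{H}$, and wellposedness of (\ref{ODE}) then follows immediately.

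For (i), given $\Phi_{0}\in D(\mathbf{A})$, I would compute $\mathrm{Re}\,(\mathbf{A}\Phi_{0},\Phi_{0})_{\mathbf{H}}$ by applying Green's identity to each block of $\mathbf{A}$: the Laplacian on $\Omega_{f}$ (using $u_{0}|_{\Gamma_{f}}=0$ and the kinematic trace $u_{0}|_{\Gamma_{j}}=h_{1j}$ from \textbf{(A.iii)}), the thin-Lam\'{e} operator $\mathrm{div}\,\sigma_{\Gamma_{s}}$ on each polygon $\Gamma_{j}$, and the thick-Lam\'{e} operator $\mathrm{div}\,\sigma$ on $\Omega_{s}$. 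The bulk bilinear pieces pair off and their real parts cancel by symmetry of the Lam\'{e} tensor, leaving three families of interface traces to be matched: the $\frac{\partial u_{0}}{\partial \nu}$-traces at each $\Gamma_{j}$ cancel between the heat integration and the $-\frac{\partial}{\partial\nu}|_{\Gamma_{j}}$ entries of $\mathbf{A}$; the $\nu\cdot\sigma(w_{0})$-traces at $\Gamma_{j}$ cancel between the $\Omega_{s}$-integration and the $\nu\cdot\sigma(\cdot)|_{\Gamma_{j}}$ entries of $\mathbf{A}$ (the correct sign being automatic once one notes that the outward normal to $\Omega_{s}$ is $-\nu$); and the edge integrals $\sum_{j}\langle n_{j}\cdot\sigma_{\Gamma_{s}}(h_{0j}),h_{1j}\rangle_{\partial\Gamma_{j}}$ cancel pairwise along each shared edge $\partial\Gamma_{j}\cap\partial\Gamma_{l}$ thanks to the displacement continuity \textbf{(A.iv)(a)} and the traction balance \textbf{(A.iv)(b)}. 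What survives is exactly $\mathrm{Re}\,(\mathbf{A}\Phi_{0},\Phi_{0})_{\mathbf{H}}=-\|\nabla u_{0}\|_{\Omega_{f}}^{2}\leq 0$.

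For (ii), given $F=[f_{0},g_{01},g_{11},\ldots,g_{0K},g_{1K},p_{0},p_{1}]\in\mathbf{H}$, I would first eliminate the velocity slots algebraically via $h_{1j}=h_{0j}-g_{0j}$ and $w_{1}=w_{0}-p_{0}$, whereupon $(I-\mathbf{A})\Phi_{0}=F$ collapses to the coupled stationary elliptic system $u_{0}-\Delta u_{0}=f_{0}$ in $\Omega_{f}$, $2h_{0j}-\mathrm{div}\,\sigma_{\Gamma_{s}}(h_{0j})+\frac{\partial u_{0}}{\partial \nu}|_{\Gamma_{j}}-\nu\cdot\sigma(w_{0})|_{\Gamma_{j}}=g_{1j}+g_{0j}$ on each $\Gamma_{j}$, and $w_{0}-\mathrm{div}\,\sigma(w_{0})=p_{0}+p_{1}$ in $\Omega_{s}$, coupled by the Dirichlet matches $u_{0}|_{\Gamma_{f}}=0$, $u_{0}|_{\Gamma_{j}}=h_{0j}-g_{0j}$, $w_{0}|_{\Gamma_{j}}=h_{0j}$ (the last using the built-in $\mathbf{H}$-compatibility $p_{0}|_{\Gamma_{j}}=g_{0j}$) and the edge matches of \textbf{(A.iv)}. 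After absorbing the inhomogeneous Dirichlet data into a fixed lifting, I would cast the problem variationally on the closed subspace $V\subset H^{1}(\Omega_{f})\times\prod_{j}H^{1}(\Gamma_{j})\times H^{1}(\Omega_{s})$ of tuples $[u,h_{1},\ldots,h_{K},w]$ satisfying $u|_{\Gamma_{f}}=0$, $u|_{\Gamma_{j}}=h_{j}$, $w|_{\Gamma_{j}}=h_{j}$, and edge continuity. The natural bilinear form, assembling $(u,\cdot)_{\Omega_{f}}+(\nabla u,\nabla\cdot)_{\Omega_{f}}$ with the thin-Lam\'{e} energies on each $\Gamma_{j}$ and the thick-Lam\'{e} energy on $\Omega_{s}$, is coercive on $V$ by virtue of the $\|u\|^{2}$, $\|h_{j}\|^{2}$, $\|w\|^{2}$ lower-order terms together with Korn's inequality on $\Omega_{s}$ and on each polygon $\Gamma_{j}$; Lax--Milgram then delivers a unique weak solution. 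A componentwise distributional argument confirms that this weak solution satisfies \textbf{(A.i)}--\textbf{(A.iv)}, hence lies in $D(\mathbf{A})$.

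The main obstacle will lie in the range step: setting up $V$ so that the displacement and traction matching conditions at the edges $\partial\Gamma_{j}\cap\partial\Gamma_{l}$ are correctly encoded on the merely Lipschitz polyhedral geometry, ensuring Korn coercivity is closed in this nonsmooth setting, and justifying the recovery of the nonvariational conditions \textbf{(A.ii)(c)} and \textbf{(A.iv)(b)} from the weak formulation as natural interface conditions. These are exactly the same issues that are handled in \cite{jde} for the corresponding multilayered wave-heat system, and the present thin-Lam\'{e}/thick-Lam\'{e} analogue should yield to a direct adaptation of that program, with scalar energy identities replaced by Lam\'{e} ones and scalar Poincar\'{e}/Friedrichs inequalities by Korn's inequality.
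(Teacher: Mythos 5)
Your Lumer--Phillips outline (dissipativity via the interface/edge cancellations, plus maximality by eliminating the velocity slots and applying Lax--Milgram with Korn's inequality on the coupled variational space) is precisely the route the paper intends: the paper gives no proof of Theorem \ref{well} itself, deferring to the analogous argument in \cite{jde}, and your dissipativity computation reproduces the identity the paper establishes in Proposition \ref{diss}. The proposal is correct and takes essentially the same approach.
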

In fact, the main result of this manuscript is to show that the solution to
the system (\ref{2a})-(\ref{IC}) has a decay to the zero state. To prove
this, we firstly need to give the following dissipation estimate from which the
decay arises: 

\begin{proposition}
\label{diss} The solution of  (\ref{2a})-(\ref{IC}), or equivalently (\ref%
{ODE}), satisfies the following relation for $0\leq t_{0}\leq t\leq \infty :$%
\begin{equation}
2\int\limits_{t_0}^{t}\left\Vert \nabla u(\tau )\right\Vert _{\Omega
_{f}}^{2}d\tau +\left\Vert \Phi (t)\right\Vert _{\mathbf{H}}^{2}=\left\Vert
\Phi (t_{0})\right\Vert _{\mathbf{H}}^{2}  \label{onbir}
\end{equation}
\end{proposition}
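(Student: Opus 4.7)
The plan is to establish the identity first for classical solutions $\Phi_0\in D(\mathbf{A})$ and then extend to $\mathbf{H}$ by density. By Theorem~\ref{well}, if $\Phi_0\in D(\mathbf{A})$ the trajectory $\Phi(t)=e^{\mathbf{A}t}\Phi_0$ lies in $C^1([0,T];\mathbf{H})\cap C([0,T];D(\mathbf{A}))$, whence $\tfrac{d}{dt}\|\Phi(t)\|_{\mathbf{H}}^2 = 2\,\mathrm{Re}(\mathbf{A}\Phi(t),\Phi(t))_{\mathbf{H}}$. Thus (\ref{onbir}) would follow at once from the pointwise identity
\[
(\mathbf{A}\Phi,\Phi)_{\mathbf{H}} \;=\; -\|\nabla u\|_{\Omega_f}^2 \qquad \forall\,\Phi\in D(\mathbf{A}),
\]
integrating on $[t_0,t]$, together with the standard density/contractivity argument that extends the resulting identity from $D(\mathbf{A})$ to $\mathbf{H}$.

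To establish this pointwise identity, I would expand $(\mathbf{A}\Phi,\Phi)_{\mathbf{H}}$ using (\ref{4a}) and (\ref{Hilbert}) and then integrate by parts on each of the three geometries. Green's identity on $\Omega_f$ gives $(\Delta u,u)_{\Omega_f}=-\|\nabla u\|_{\Omega_f}^2+\sum_j\langle\partial_\nu u,h_{1j}\rangle_{\Gamma_j}$, where $u|_{\Gamma_f}=0$ and $u|_{\Gamma_j}=h_{1j}$ were invoked from (A.iii); the surviving boundary piece cancels the $(-\partial_\nu u|_{\Gamma_j},h_{1j})_{\Gamma_j}$ contribution coming from the third block row of $\mathbf{A}$. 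Green's identity on $\Omega_s$ (outward normal $-\nu$) produces a bulk term $-(\sigma(w_0),\epsilon(w_1))_{\Omega_s}$ which, by symmetry of the Lamé form, cancels the $(\sigma(w_1),\epsilon(w_0))_{\Omega_s}$ coming from the $w_0$-slot of the inner product, together with a boundary piece $-\sum_j\langle\nu\cdot\sigma(w_0),h_{1j}\rangle_{\Gamma_j}$ (using $w_1|_{\Gamma_j}=h_{1j}$) that kills the coupling term $(\nu\cdot\sigma(w_0)|_{\Gamma_j},h_{1j})_{\Gamma_j}$. Finally, integration by parts of $\mathrm{div}\,\sigma_{\Gamma_s}(h_{0j})$ on each polygonal face $\Gamma_j$ kills $(\sigma_{\Gamma_s}(h_{1j}),\epsilon_{\Gamma_s}(h_{0j}))_{\Gamma_j}$ and generates an edge integral on $\partial\Gamma_j$; the $\pm h_{0j}$ terms similarly drop out between the third row of $\mathbf{A}$ and the $(h_{0j},h_{0j})_{\Gamma_j}$ piece of (\ref{Hilbert}). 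The remaining edge integrals are grouped pairwise along shared edges $\partial\Gamma_j\cap\partial\Gamma_l$: by (A.iv)(a) the trace $h_{1j}=h_{1l}$ is common, and by (A.iv)(b) the conormal tractions satisfy $n_j\cdot\sigma_{\Gamma_s}(h_{0j})+n_l\cdot\sigma_{\Gamma_s}(h_{0l})=0$, so each pair of edge contributions cancels identically. Only $-\|\nabla u\|_{\Omega_f}^2$ survives.

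The main technical issue is justifying the various applications of Green's identity at the regularity level furnished by $D(\mathbf{A})$. On the Lipschitz domains $\Omega_f$ and $\Omega_s$ this is standard once one interprets $\partial_\nu u\in H^{-1/2}(\partial\Omega_f)$ and $\nu\cdot\sigma(w_0)\in H^{-1/2}(\partial\Omega_s)$ in the duality sense with the $H^{1/2}$-traces of the test factor, condition (A.ii) supplying exactly the $L^2$ bulk regularity needed to define these normal traces. The more delicate step is the two-dimensional integration by parts over each polygon $\Gamma_j$: there $h_{0j}$ is only in $H^1(\Gamma_j)$, and (A.iv)(b) must be read as a distributional identity for conormal tractions in $H^{-1/2}(\partial\Gamma_j\cap\partial\Gamma_l)$ paired against the common trace $h_{1j}|_{\partial\Gamma_j\cap\partial\Gamma_l}\in H^{1/2}$. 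This edge bookkeeping, where the polyhedral geometry first enters the dissipation analysis, is the step I expect to require the most care; once it is in place, the identity above holds, and (\ref{onbir}) follows by time integration and density.
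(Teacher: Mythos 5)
Your proposal is correct and follows essentially the same route as the paper: expand $(\mathbf{A}\Phi,\Phi)_{\mathbf{H}}$, integrate by parts on $\Omega_f$, on $\Omega_s$, and on each face $\Gamma_j$, and cancel the interface and edge contributions via (A.iii)--(A.iv); the paper simply integrates in time from the outset rather than isolating a pointwise identity and passing to general data by density, which is a cosmetic difference. One small correction: over a complex Hilbert space the skew pairs such as $(\sigma(w_1),\epsilon(w_0))_{\Omega_s}-(\sigma(w_0),\epsilon(w_1))_{\Omega_s}$ cancel only in their real parts and leave purely imaginary residues (recorded explicitly in the paper as the $2i\,\mathrm{Im}$ terms in (\ref{oniki})), so your pointwise identity should read $\mathrm{Re}\,(\mathbf{A}\Phi,\Phi)_{\mathbf{H}}=-\left\Vert \nabla u\right\Vert_{\Omega_f}^2$ --- harmless for your argument, since you had already reduced to $2\,\mathrm{Re}(\mathbf{A}\Phi,\Phi)_{\mathbf{H}}$.
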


\begin{proof}
With respect to the right hand side of (\ref{ODE}) we have, upon taking the $\mathbf{H}$-inner product with respect to $\Phi$, and then an
integration over $(t_0,t),$%
\begin{eqnarray*}
\int\limits_{t_{0}}^{t}\left( \mathbf{A}\Phi (\tau ),\Phi (\tau )\right) _{%
\mathbf{H}}d\tau  &=&\int\limits_{t_{0}}^{t}\left( \Delta u,u\right)
_{\Omega _{f}}d\tau +\sum\limits_{j=1}^{K}\int\limits_{t_{0}}^{t}\left[
(\sigma _{\Gamma _{s}}(\frac{\partial }{\partial t}h_{j}),\epsilon _{\Gamma
_{s}}(h_{j}))_{\Gamma _{j}}+(\frac{\partial }{\partial t}h_{j},h_{j})_{%
\Gamma _{j}}\right] d\tau  \\
&&+\sum\limits_{j=1}^{K}\int\limits_{t_{0}}^{t}\left[ (\text{div}\sigma
_{\Gamma _{s}}(h_{j}),\frac{\partial }{\partial t}h_{j})_{\Gamma
_{j}}-(h_{j},\frac{\partial }{\partial t}h_{j})_{\Gamma _{j}}\right] d\tau 
\\
&&+\sum\limits_{j=1}^{K}\int\limits_{t_{0}}^{t}\left\langle \nu \cdot
\sigma (w)-\frac{\partial u}{\partial \nu },\frac{\partial }{\partial t}h_{j}\right\rangle _{\Gamma
_{j}}d\tau +\int\limits_{t_{0}}^{t}\left( \sigma (w_{t}),\epsilon
(w)\right) _{\Omega _{s}}d\tau  \\
&&+\int\limits_{t_{0}}^{t}\left( \text{div}\sigma (w),\epsilon
(w_{t})\right) _{\Omega _{s}}d\tau  \\
&=&-\int\limits_{t_{0}}^{t}\left( \nabla u,\nabla u\right) _{\Omega
_{f}}d\tau +\sum\limits_{j=1}^{K}\int\limits_{t_{0}}^{t}\left[ (\sigma
_{\Gamma _{s}}(\frac{\partial }{\partial t}h_{j}),\epsilon _{\Gamma
_{s}}(h_{j}))_{\Gamma _{j}}+(\frac{\partial }{\partial t}h_{j},h_{j})_{%
\Gamma _{j}}\right] d\tau \\
 &&-\sum\limits_{j=1}^{K}\int\limits_{t_{0}}^{t}\left[ \overline{(\sigma
_{\Gamma _{s}}(\frac{\partial }{\partial t}h_{j}),\epsilon _{\Gamma
_{s}}(h_{j}))_{\Gamma _{j}}}+\overline{(\frac{\partial }{\partial t}%
h_{j},h_{j})_{\Gamma _{j}}}\right] d\tau \\
&&+\int\limits_{t_{0}}^{t}\left( \sigma (w_{t}),\epsilon (w)\right)
_{\Omega _{s}}d\tau -\int\limits_{t_{0}}^{t}\overline{\left( \sigma
(w_{t}),\epsilon (w)\right) _{\Omega _{s}}}d\tau \\
 \end{eqnarray*}
\begin{eqnarray*}
&&+\int\limits_{t_{0}}^{t}\left\langle \frac{\partial u}{\partial \nu }%
,u\right\rangle _{\partial \Omega _{f}}d\tau
+\sum\limits_{j=1}^{K}\int\limits_{t_{0}}^{t}\left\langle \nu \cdot \sigma
(w)-\frac{\partial u}{\partial \nu },\frac{\partial }{\partial t}h_{j}\right\rangle _{\Gamma _{j}}d\tau 
\\
&&-\int\limits_{t_{0}}^{t}\left\langle \nu \cdot \sigma
(w),w_{t}\right\rangle _{\Gamma _{s}}d\tau
+\sum\limits_{j=1}^{K}\int\limits_{t_{0}}^{t}\left\langle n_{j}\cdot \sigma
_{\Gamma _{s}}(h_{j}),\frac{\partial }{\partial t}h_{j}\right\rangle
_{\Gamma _{j}}d\tau .
\end{eqnarray*}%
Invoking the BCs in (A.iii) and (A.iv), imposed on the
structure-structure-heat variables, we then have%
\begin{equation}
\int\limits_{t_{0}}^{t}\left( \mathbf{A}\Phi (\tau ),\Phi (\tau )\right) _{%
\mathbf{H}}d\tau =-\int\limits_{t_{0}}^{t}\left\Vert \nabla u\right\Vert
_{\Omega _{f}}^{2}d\tau +2i\text{Im}\sum\limits_{j=1}^{K}\int%
\limits_{t_{0}}^{t}\left[ (\sigma _{\Gamma _{s}}(\frac{\partial }{\partial t}%
h_{j}),\epsilon _{\Gamma _{s}}(h_{j}))_{\Gamma _{j}}+(\frac{\partial }{%
\partial t}h_{j},h_{j})_{\Gamma _{j}}\right] d\tau.   \label{oniki}
\end{equation}%
Applying this relation to the RHS of the relation, 
\[
\int\limits_{t_{0}}^{t}\left( \frac{d}{d\tau }\Phi (\tau ),\Phi (\tau
)\right) _{\mathbf{H}}d\tau =\int\limits_{t_{0}}^{t}\left( \mathbf{A}\Phi
(\tau ),\Phi (\tau )\right) _{\mathbf{H}}d\tau 
\]%
we obtain the desired estimate.
\end{proof}

\section{Main Result: Strong Stability via Resolvent Criterion}

This section is devoted to addressing the issue of asymptotic behavior of the solution whose existence - uniqueness is guaranteed by Theorem \ref{well}. In this regard, we show that the system given in (\ref%
{2a})-(\ref{IC}) is strongly stable in the finite energy space $\mathbf{H}$. Our main  result is as follows:

\begin{theorem}
\label{st} Finite energy solutions of the multilayered-heat PDE system (\ref%
{2a})-(\ref{IC}), or equivalently (\ref{ODE}) decay strongly to zero. That
is to say, the solution of (\ref{2a})-(\ref{IC}), with initial data $\Phi
_{0}\in $ $\mathbf{H,}$ satisfies%
\[
\lim_{t\rightarrow 0}\left\Vert \Phi (t)\right\Vert _{\mathbf{H}}=0.
\]
\end{theorem}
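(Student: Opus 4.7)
The plan is to invoke the resolvent criterion of Tomilov (Theorem \ref{C-T}) for strong stability on Hilbert space. Since Theorem \ref{well} and Proposition \ref{diss} already ensure that $\{e^{\mathbf{A}t}\}_{t\geq 0}$ is a contraction semigroup on $\mathbf{H}$, two ingredients remain to be verified: \emph{(a)} the semigroup is completely non-unitary (c.n.u.); and \emph{(b)} a pointwise resolvent condition of the form $\sqrt{\lambda}\,\|(\lambda + i\beta - \mathbf{A})^{-1}\Phi\|_{\mathbf{H}} \to 0$ as $\lambda \downarrow 0^+$, for each $\Phi \in \mathbf{H}$ and a.e.\ $\beta \in \mathbb{R}$.

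To establish \emph{(a)}, I would pick $\Phi_0 \in \mathbf{H}$ generating an isometric orbit, $\|e^{\mathbf{A}t}\Phi_0\|_{\mathbf{H}} \equiv \|\Phi_0\|_{\mathbf{H}}$, and deduce $\Phi_0 = 0$. The dissipation identity (\ref{onbir}) instantly yields $\nabla u(\tau) \equiv 0$ in $\Omega_f$ for every $\tau \geq 0$, and combined with $u|_{\Gamma_f} = 0$ this forces $u \equiv 0$ throughout. The interface matching $u|_{\Gamma_j} = \partial_t h_j = w_t|_{\Gamma_j}$ then transmits the vanishing to $\partial_t h_j \equiv 0$ and $w_t|_{\Gamma_j} \equiv 0$ on each face, while the vanishing of $u$ in $\Omega_f$ also gives $\partial u/\partial \nu|_{\Gamma_j} \equiv 0$. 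Substituting into the thin-layer equation (\ref{2.5b}) and differentiating once in time delivers the overdetermined stress trace $\nu \cdot \sigma(w_t)|_{\Gamma_j} \equiv 0$. Setting $z := w_t$, the thick Lamé component $z$ solves $z_{tt} = \mathrm{div}\,\sigma(z)$ on $\Omega_s$ with simultaneous $z|_{\Gamma_s} = 0$ and $\sigma(z)\nu|_{\Gamma_s} = 0$; the global Holmgren-type theorem of Eller and Toundykov \cite{Eller-Tound} then yields $z \equiv 0$, so $w$ is stationary. Testing the resulting static system $\mathrm{div}\,\sigma(w_0) = 0$ against $w_0$ and the static thin-layer equation against each $h_{0j}$, invoking the interface identification $w_0|_{\Gamma_j} = h_{0j}$ and using (A.iv) to cancel the inter-face stress traces pairwise, produces an energy identity; combined with Korn's inequality on $\Omega_s$ and the $L^2$ mass term already present in the thin-layer PDE, this forces $w_0 \equiv 0$ and $h_{0j} \equiv 0$, hence $\Phi_0 = 0$.

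For \emph{(b)}, given $\beta \in \mathbb{R}$ and $\Phi \in \mathbf{H}$, set $\Phi^\lambda := (\lambda + i\beta - \mathbf{A})^{-1}\Phi$, and pair the resolvent equation with $\Phi^\lambda$ in $\mathbf{H}$. Taking real parts and exploiting the dissipation structure of $\mathbf{A}$ yields the frequency-domain analogue of Proposition \ref{diss},
\[
\lambda\,\|\Phi^\lambda\|_{\mathbf{H}}^2 + \|\nabla u^\lambda\|_{\Omega_f}^2 = \mathrm{Re}\,(\Phi,\Phi^\lambda)_{\mathbf{H}}.
\]
Arguing by contradiction, if $\sqrt{\lambda_n}\,\|\Phi^{\lambda_n}\|_{\mathbf{H}}$ remains bounded away from zero along some sequence $\lambda_n \downarrow 0^+$, set $\widehat{\Phi}^n := \Phi^{\lambda_n}/\|\Phi^{\lambda_n}\|_{\mathbf{H}}$; the identity above then forces $\nabla \widehat{u}^n \to 0$ in $L^2(\Omega_f)$, and hence $\widehat{u}^n \to 0$ in $H^1(\Omega_f)$ by the Dirichlet condition. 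Carrying the vanishing through the structure components via the higher trace regularity of Grisvard \cite{Grisvard} for the uncoupled 3D Lamé problem on polyhedra---which validates the stress traces $\nu \cdot \sigma(\widehat{w}^n)|_{\Gamma_j}$ uniformly in $n$---one extracts a weak-limit element annihilated by $(i\beta - \mathbf{A})$; the c.n.u.\ argument from \emph{(a)}, now applied at the spectral parameter $i\beta$, forces that limit to be zero, eventually contradicting the normalization via the coercivity produced by Korn.

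The main obstacle lies in step \emph{(a)} and its interplay with the polyhedral geometry of $\Omega_s$: in order to apply the Eller--Toundykov uniqueness result one must produce sufficient regularity of $w_t$ near the dihedral edges of $\Gamma_s$ that both trace conditions $w_t|_{\Gamma_s} = 0$ and $\sigma(w_t)\nu|_{\Gamma_s} = 0$ are meaningful, and the time differentiation of the thin-layer PDE that generates the stress trace must be rigorously justified. This is precisely where the Grisvard theory for Lamé on polyhedra is marshaled as an ancillary tool. A secondary obstacle, absent in the boundary-damped wave literature on strong decay, is the noncompactness of $D(\mathbf{A}) \hookrightarrow \mathbf{H}$, which precludes classical subsequential compactness in the resolvent step and requires a careful componentwise passage to the limit through the interface traces.
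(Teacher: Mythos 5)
Your part \emph{(a)} is essentially the paper's Lemma \ref{cnu}: dissipation forces $u\equiv 0$, the interface conditions propagate this to $\partial_t h_j$ and $w_t|_{\Gamma_s}$, time-differentiation of the thin-layer equation produces the second trace $\nu\cdot\sigma(w_t)|_{\Gamma_s}=0$, and the Eller--Toundykov global Holmgren theorem kills $w_t$; your static energy identity to finish off $w_0$ and $h_{0j}$ is a minor variant of the paper's argument and is fine. The problem is part \emph{(b)}, where you have identified the obstacle but not the idea that overcomes it. Your scheme --- normalize $\widehat{\Phi}^n=\Phi^{\lambda_n}/\|\Phi^{\lambda_n}\|_{\mathbf{H}}$, extract a weak limit, show the limit is annihilated by $i\beta-\mathbf{A}$, and ``contradict the normalization'' --- is exactly the classical compactness argument, and it fails here for the reason you yourself state: $D(\mathbf{A})$ is not compactly embedded in $\mathbf{H}$, so a weak limit equal to zero is perfectly consistent with $\|\widehat{\Phi}^n\|_{\mathbf{H}}=1$. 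Korn's inequality gives coercivity of the elastic energy over the displacement components, but it does not produce strong convergence of the velocity components $w_1$, $h_{1j}$ or of $u_0$ in $L^2$, so there is no contradiction to be had. Moreover, showing the weak limit satisfies $(i\beta-\mathbf{A})\Psi=0$ and invoking \emph{(a)} is really the Arendt--Batty point-spectrum route of \cite{jde}, which is precisely what the resolvent-criterion approach is designed to avoid.

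What the paper actually does in this step is a direct, quantitative estimate with no contradiction and no compactness. Fix $\beta$ outside the countable set $S$ of reals with $\beta^{2}\in\sigma(A_D)$, $A_D$ the Dirichlet Lam\'e operator on $\Omega_s$ (this identifies the ``a.e.\ $\beta$'' exceptional set, which your write-up never specifies). The static dissipation identity gives $\sqrt{\alpha}\,\|\Phi\|_{\mathbf{H}}$, $\|\nabla u_0\|_{\Omega_f}$, and hence the interface traces $u_0|_{\Gamma_j}=h_{1j}=[(\alpha+i\beta)w_0-w_0^{*}]|_{\Gamma_j}$, all of order $\sqrt{|(\Phi_0^{*},\Phi)_{\mathbf{H}}|}$. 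The key device is then the Dirichlet-map decomposition $w_0=z+\tfrac{i}{\beta}D([\alpha w_0-u_0-w_0^{*}]|_{\Gamma_s})$, which converts the thick Lam\'e resolvent equation into a homogeneous Dirichlet problem for $z$; since $\beta^{2}\notin\sigma(A_D)$ one inverts $(\beta^{2}-A_D)$ to control $\|z\|_{\Omega_s}$, and the Grisvard $H^{3/2+\epsilon}$ regularity (Proposition \ref{reg}) then controls the stress trace $\nu\cdot\sigma(z)|_{\Gamma_j}$ --- this, not a limit passage, is ``carrying the vanishing through the structure.'' Feeding $w_0$, $\nu\cdot\sigma(w_0)|_{\Gamma_s}$ and $\partial u_0/\partial\nu$ into the thin-layer equation tested against $h_{0j}$ closes the loop and yields $\|\Phi\|_{\mathbf{H}}\le C_{\beta}\|\Phi_0^{*}\|_{\mathbf{H}}$ uniformly in $\alpha\in(0,M_0)$, whence $\sqrt{\alpha}\,R(\alpha+i\beta;\mathbf{A})\Phi_0^{*}\to 0$ for every $\Phi_0^{*}\in\mathbf{H}$ and every $\beta\notin S\cup\{0\}$. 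Without the spectral restriction on $\beta$, the Dirichlet-map splitting, and the resulting uniform resolvent bound, your step \emph{(b)} does not go through.
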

To prove Theorem \ref{st}, in contrast to the approach taken in \cite{jde} which is geared to invoke the
wellknown spectral criteria for stability in \cite{A-B}, we will adopt
instead a resolvent-based methodology. In particular, we will ultimately
appeal to the following theorem (see \cite[Theorem 8.7]{CT} or
\cite[pp.75-76]{Tomilov}, see also \cite{Levan-2}.) 

\begin{theorem}
\label{C-T} Let $A$ generate a $C_{0}-$semigroup of completely, non-unitary
contractions on a Hilbert space $H$. If there exists a dense set $M\subset H$
such that%
\begin{equation}
\lim_{\alpha \rightarrow 0^{+}}\sqrt{\alpha }R(\alpha +i\beta ;A)x=0
\label{oniki-5}
\end{equation}%
for every $x\in M$ and almost every $\beta \in 
%TCIMACRO{\U{211d} }%
%BeginExpansion
\mathbb{R}
%EndExpansion
,$ then the semigroup is strongly stable.
\end{theorem}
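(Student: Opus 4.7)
The plan is to prove Tomilov's resolvent criterion by reducing strong stability, via Plancherel's identity for the Laplace transform, to the vanishing of a weighted $L^{2}$ average of the resolvent, and then to use the completely non-unitary hypothesis as the mechanism that lifts the pointwise resolvent decay $\sqrt{\alpha}R(\alpha+i\beta;A)x\to 0$ to the integrated version actually needed.

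First, I would perform a Plancherel reduction. Fix $x\in H$ and $\alpha>0$. The Laplace representation $R(\alpha+i\beta;A)x=\int_{0}^{\infty}e^{-(\alpha+i\beta)t}T(t)x\,dt$ identifies $\beta\mapsto R(\alpha+i\beta;A)x$ with the $H$-valued Fourier transform of $t\mapsto e^{-\alpha t}T(t)x\in L^{2}(\mathbb{R}_{+};H)$, so the vector-valued Parseval identity gives
\[
\frac{\alpha}{\pi}\int_{\mathbb{R}}\|R(\alpha+i\beta;A)x\|_{H}^{2}\,d\beta \;=\; 2\alpha\int_{0}^{\infty}e^{-2\alpha t}\|T(t)x\|_{H}^{2}\,dt.
\]
Because $T(\cdot)$ is contractive, $t\mapsto\|T(t)x\|^{2}$ is non-increasing, so $L(x)^{2}:=\lim_{t\to\infty}\|T(t)x\|^{2}$ exists and, by Abel's theorem, the right-hand side converges to $L(x)^{2}$ as $\alpha\to 0^{+}$. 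A standard $\varepsilon/3$ argument using density of $M$ and $\|T(t)\|\le 1$ then reduces strong stability on all of $H$ to proving $L(x)=0$ for every $x\in M$, equivalently to
\[
\lim_{\alpha\to 0^{+}}\alpha\int_{\mathbb{R}}\|R(\alpha+i\beta;A)x\|^{2}\,d\beta=0\qquad(x\in M).
\]

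Second, and this is the crux, I would upgrade the pointwise hypothesis to $L^{1}$-convergence in $\beta$. The functions $g_{\alpha}(\beta):=\alpha\|R(\alpha+i\beta;A)x\|^{2}$ satisfy $g_{\alpha}(\beta)\to 0$ for a.e.\ $\beta$ by hypothesis, and the Plancherel identity above gives $\int_{\mathbb{R}}g_{\alpha}\,d\beta\le\pi\|x\|^{2}$ uniformly in $\alpha$. To conclude $\int g_{\alpha}\to 0$ I would invoke Vitali's convergence theorem, for which I need equi-integrability of $\{g_{\alpha}\}$. The c.n.u.\ hypothesis is precisely the supply of that equi-integrability. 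Via the Sz.-Nagy–Foias minimal unitary dilation $U(\cdot)$ of $T(\cdot)$ on some Hilbert space $K\supset H$ with $T(t)=P_{H}U(t)\big|_{H}$ for $t\ge 0$, integrating term by term yields $R(\alpha+i\beta;A)x=P_{H}R(\alpha+i\beta;\mathcal{G})x$ where $\mathcal{G}$ is the skew-adjoint generator of $U(\cdot)$; complete non-unitarity forces the scalar spectral measure $\mu_{x}$ of $x$ with respect to $\mathcal{G}$ to be absolutely continuous with density $f_{x}\in L^{1}(\mathbb{R})$. The spectral theorem then yields the pointwise bound $g_{\alpha}(\beta)\le\pi(P_{\alpha}\ast f_{x})(\beta)$, i.e.\ $g_{\alpha}$ is dominated by the Poisson extension of $f_{x}$ to the upper half-plane, and standard Hardy–Littlewood maximal-function/Poisson-kernel estimates supply the equi-integrability needed for Vitali.

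The main obstacle is exactly this equi-integrability/domination step, and it is where the c.n.u.\ hypothesis enters essentially. Without c.n.u., a unitary direct summand of $T$ would contribute a point mass to $\mu_{x}$ and convert the Poisson-extension majorant into non-integrable $\delta$-like spikes — as it must, since such a summand would force $L(x)>0$ on its range and contradict the conclusion. With c.n.u.\ in hand, absolute continuity of $\mu_{x}$ kills these spikes, Vitali's theorem closes the step, and the Plancherel reduction of the first paragraph then forces $L(x)=0$ for every $x\in M$, hence strong stability of the semigroup $\{e^{At}\}_{t\ge 0}$.
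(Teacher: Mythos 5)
Your proposal is correct, but there is nothing in the paper to compare it against: Theorem \ref{C-T} is imported verbatim from \cite[Theorem 8.7]{CT} and \cite{Tomilov} and is never proved in this manuscript, which only uses it as a black box. What you have written is essentially a faithful reconstruction of the proof in those references: the Plancherel identity $\frac{\alpha}{\pi}\int_{\mathbb{R}}\|R(\alpha+i\beta;A)x\|^{2}\,d\beta=2\alpha\int_{0}^{\infty}e^{-2\alpha t}\|T(t)x\|^{2}\,dt$ together with the Abelian limit and the $\varepsilon/3$ density argument reduces everything to $\lim_{\alpha\to0^{+}}\int_{\mathbb{R}}g_{\alpha}=0$, and the c.n.u.\ hypothesis enters exactly where you place it, through the Sz.-Nagy--Foias theorem that the minimal unitary dilation of a c.n.u.\ contraction semigroup is absolutely continuous, which yields the Poisson majorant $g_{\alpha}\leq\pi\,P_{\alpha}\ast f_{x}$ with $f_{x}\in L^{1}(\mathbb{R})$. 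The only imprecision is your appeal to Hardy--Littlewood maximal-function estimates for the equi-integrability: the maximal function of an $L^{1}$ density is in general only weak-$L^{1}$, so it cannot serve as an integrable dominant. The correct (and standard) justification is that $P_{\alpha}\ast f_{x}\to f_{x}$ in $L^{1}(\mathbb{R})$ as $\alpha\to0^{+}$ by the approximate-identity property, and an $L^{1}$-convergent family is automatically uniformly integrable and tight; equivalently, one may invoke Pratt's generalized dominated convergence theorem with the varying majorants $\pi\,P_{\alpha}\ast f_{x}$. With that substitution your argument is complete.
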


\begin{remark}
We recall that an operator $L\in \mathcal{L}(H)$ is completely non-unitary
(c.n.u) if the trivial subspace is the only subspace of $H$ which reduces $L$
to a unitary operator. (See, e.g., \cite{Levan})
\end{remark}

\begin{lemma}
\label{cnu} The given elastic-elastic-heat semigroup $\{e^{\mathbf{A}t}\}_{t\geq 0}$ 
is completely non-unitary (c.n.u).
\end{lemma}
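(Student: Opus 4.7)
The plan is to argue by contradiction: assume there exists a nontrivial closed subspace $V\subset \mathbf{H}$ on which $\{e^{\mathbf{A}t}\}_{t\geq 0}$ restricts to a group of unitary operators, fix $\Phi_0\in V$ (by density, taking $\Phi_0\in V\cap D(\mathbf{A})$ so that trace terms below are meaningful), and denote the orbit $\Phi(t)=e^{\mathbf{A}t}\Phi_0$, now defined for all $t\in\mathbb{R}$ via the group structure on $V$. Norm preservation together with the dissipation identity of Proposition~\ref{diss} immediately yields
\[\int_{0}^{t}\|\nabla u(\tau)\|_{\Omega_{f}}^{2}\,d\tau=0 \quad \text{for every } t>0,\]
so that $\nabla u\equiv 0$ in $\Omega_{f}\times\mathbb{R}$, which combined with the Dirichlet condition $u|_{\Gamma_{f}}=0$ forces $u\equiv 0$.

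With $u\equiv 0$ in hand, the interface condition in (\ref{2d}) yields $w_{t}|_{\Gamma_{s}}\equiv 0$ (whence each $h_{j}$ is time-independent and $h_{1j}\equiv 0$), while $\partial u/\partial\nu|_{\Gamma_{j}}\equiv 0$. The thin-layer equation (\ref{2.5b}) then collapses to the elliptic identity
\[-\text{div}\,\sigma_{\Gamma_{s}}(h_{0j})+h_{0j}=\nu\cdot\sigma(w(t))|_{\Gamma_{j}},\]
whose left-hand side is time-independent, so the trace $\nu\cdot\sigma(w(t))|_{\Gamma_{j}}$ must also be time-independent. Setting $v:=w_{t}$ and differentiating the thick Lam\'e equation in $t$, I obtain the overdetermined Lam\'e evolution on the polyhedral body $\Omega_{s}$:
\[v_{tt}-\text{div}\,\sigma(v)=0 \text{ in } \Omega_{s}\times\mathbb{R}, \quad v|_{\Gamma_{s}}=0, \quad \nu\cdot\sigma(v)|_{\Gamma_{s}}=0.\]
I then invoke the Holmgren-type global uniqueness theorem of \cite{Eller-Tound} for overdetermined Lam\'e systems on nonsmooth domains (with Grisvard's higher polyhedral regularity \cite{Grisvard} used to place the trace $\nu\cdot\sigma(w)|_{\Gamma_{s}}$ in the requisite function class) to conclude $v\equiv 0$, i.e.\ $w_{t}\equiv 0$. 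Together with $u\equiv 0$ and $h_{1j}\equiv 0$, this means $\Phi(t)\equiv\Phi_{0}$, so $\Phi_{0}\in\ker\mathbf{A}$.

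It therefore suffices to show $\ker\mathbf{A}=\{0\}$. For $\Phi_{0}\in\ker\mathbf{A}$, one has $u_{0}=0$, $h_{1j}=w_{1}=0$, $\text{div}\,\sigma(w_{0})=0$ in $\Omega_{s}$, $w_{0}|_{\Gamma_{j}}=h_{0j}$, and $-\text{div}\,\sigma_{\Gamma_{s}}(h_{0j})+h_{0j}=\nu\cdot\sigma(w_{0})|_{\Gamma_{j}}$. Pair this last constraint with $\overline{h_{0j}}$ in $L^{2}(\Gamma_{j})$, integrate tangentially by parts on $\Gamma_{j}$, and sum over $j$: the junction contributions on $\partial\Gamma_{j}\cap\partial\Gamma_{l}$ cancel in view of (A.iv)(a)--(b). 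Applying the divergence theorem on $\Omega_{s}$ to the right-hand side with $\text{div}\,\sigma(w_{0})=0$ (keeping careful track that $\nu$ is inward to $\Omega_{s}$) then yields
\[\sum_{j=1}^{K}(\sigma_{\Gamma_{s}}(h_{0j}),\epsilon_{\Gamma_{s}}(h_{0j}))_{\Gamma_{j}}+\sum_{j=1}^{K}\|h_{0j}\|_{\Gamma_{j}}^{2}+(\sigma(w_{0}),\epsilon(w_{0}))_{\Omega_{s}}=0.\]
Each summand being non-negative, every $h_{0j}$ vanishes and $\epsilon(w_{0})\equiv 0$; combined with $w_{0}|_{\Gamma_{s}}=0$, Korn's inequality forces $w_{0}\equiv 0$, whence $\Phi_{0}=0$, contradicting non-triviality of $V$.

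The hardest step is the extraction of the overdetermined Lam\'e system for $v=w_{t}$ and the application of \cite{Eller-Tound} in the polyhedral setting: this requires the traces $\nu\cdot\sigma(w(t))|_{\Gamma_{j}}$ to lie in a function class for which the Holmgren-type uniqueness is valid, and since $\Omega_{s}$ has corners and dihedral edges this regularity is non-trivial---precisely where Grisvard's polyhedral regularity for uncoupled Lam\'e systems is decisive. By contrast, the final kernel-triviality step is essentially a routine energy computation, modulo the junction cancellations and the bookkeeping of the inward orientation of $\nu$.
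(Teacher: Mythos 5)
Your proof is correct and follows essentially the same route as the paper's: the dissipation identity forces $u\equiv 0$, the interface conditions and the time-differentiated thin-layer equation produce the overdetermined Lam\'e system for $v=w_t$, the Holmgren-type theorem of \cite{Eller-Tound} gives $w_t\equiv 0$, and the residual static configuration is then shown to vanish. Your closing energy identity (tangential integration by parts with junction cancellation plus the divergence theorem with the inward normal) is simply a more explicit justification of the step the paper compresses into the assertion that $h_j=0$ follows from the thin elastic equation.
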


\begin{proof}
With reference to problem (\ref{2a})-(\ref{IC}), assume that initial data $%
\Phi _{0}$  is drawn from an invariant subspace $W\subset \mathbf{H}$ on
which the operator family $\{e^{\mathbf{A}t}\}_{t\geq 0}$ is unitary. Then from the expression (\ref{ODE}) and the dissipative relation (%
\ref{onbir}) of Proposition \ref{diss}, we have that the heat component of (%
\ref{2a})-(\ref{IC}) satisfies%
\begin{equation}
u=0\text{ \ \ \ \ on \ \ }(0,T)\times \Omega _{f}.  \label{onuc}
\end{equation}%
Consequently from the BC's in (\ref{2d}) we have%
\begin{equation}
w_{t}|_{\Gamma _{s}}=0\text{ \ \ on \ }(0,T)\times \Gamma _{s},\text{\ }
\label{14.a}
\end{equation}%
\begin{equation}
\frac{\partial }{\partial t}h_{j}=0\text{ \ \ in \ }(0,T)\times \Gamma _{j}%
\text{ \ \ for }j=1,...,K.\text{\ }  \label{14.b}
\end{equation}%
Differentiating the thin elastic equations in (\ref{2.5b}), and subsequently
invoking (\ref{onuc}) and (\ref{14.b}) we have in turn%
\begin{equation}
\nu \cdot \sigma (w_{t})|_{\Gamma _{s}}=0\text{ \ \ on \ }(0,T)\times \Gamma
_{s}.  \label{15}
\end{equation}%
If we now make the change of variable $v\equiv w_{t}$, then from (\ref{2d}),
(\ref{14.a}) and (\ref{15}), we have that $v$ satisfies the overdetermined
Lam$\acute{e}$ system%
\begin{equation}
\left\{ 
\begin{array}{c}
v_{tt}-\text{div}\sigma (v)=0\text{ \ \ on \ }(0,T)\times \Omega _{s} \\ 
v|_{\Gamma _{s}}=\nu \cdot \sigma (v)|_{\Gamma _{s}}=0\text{ \ \ on \ }%
(0,T)\times \Gamma _{s}.%
\end{array}%
\right.   \label{16}
\end{equation}%
Consequently, from the Holmgren's type result in \cite{Eller-Tound} \footnote{In \cite{Eller-Tound}, the geometry was assumed to be $C^1$. However the details of proof apply readily to piecewise $C^1-$domains. Indeed, Holmgren's uniqueness will hold for Lam$\acute{e}$ systems on Lipschitz domains; see \cite{Eller-3}.} (see also \cite{Eller-et-al}) we get for $T>0$ sufficiently large,%
\begin{equation}
v=0\text{ \ \ OR \ \ }w_{t}=0\text{ \ in \ }(0,T)\times \Omega _{s},\text{\ }
\label{17}
\end{equation}%
and so%
\begin{equation}
w=\text{constant \ \ in  }(0,T)\times \Omega _{s}.\text{\ \ }  \label{18}
\end{equation}%
From the compatibility condition between thick and thin elastic displacements in (\ref{H}), we then have from (\ref{18}) that $$h_{j}=constant \text{ \ in \ }(0,T)\times
\Gamma _{j},\text{ \ for }1\leq j\leq K.$$
Applying this consequence to the thin elastic equation in (\ref{2.5b}) and further invoking (\ref{onuc}), (\ref{14.b}) and (\ref{18}) we get 
\begin{equation}
 h_{j}=0\text{ \ in \ }(0,T)\times
\Gamma _{j},\text{ \ for }1\leq j\leq K.  \label{19}
\end{equation}%
Finally, (\ref{18}), (\ref{19}) and said compatibility condition between
thick and thin elastic displacements, imposed in the natural energy space $%
\mathbf{H,}$ yield %
\begin{equation}
w=0\text{\ \ on \ }(0,T)\times \Omega _{s}\text{.}  \label{20}
\end{equation}%
To conclude, (\ref{onuc}), (\ref{14.a}), (\ref{14.b}), (\ref{19}) and (\ref%
{20}) yield on $(0,T)$
\[
e^{\mathbf{A}t}\Phi _{0}=0,\text{ \ }\Phi _{0}\in W
\]%
so necessarily $W=\{0\}.$ This finishes the proof of Lemma \ref{cnu}.
\end{proof}
\\

\noindent Before embarking on our proof of strong stability, we recall the following
regularity result for the homogeneous boundary value problem involving the
Lam$\acute{e}$ system of elasticity on polyhedron $\Omega _{s}.$ (We are not aiming
here for great generality.)

\begin{proposition}
\label{reg} Suppose $z\in H^{1}(\Omega _{s})$ satisfies the BVP%
\[
\left\{ 
\begin{array}{c}
-\text{div}\sigma (z)=f\in L^{2}(\Omega _{s}) \\ 
z|_{\Gamma _{s}}=0\text{ \ \ on \ }\Gamma _{s}.%
\end{array}%
\right. 
\]%
Then $z$ has the higher regularity%
\begin{equation}
\left\Vert z\right\Vert _{H^{\frac{3}{2}+\epsilon }(\Omega
_{s})}+\sum\limits_{j=1}^{K}\left\Vert \nu \cdot \sigma (z)|_{\Gamma
_{j}}\right\Vert _{H^{\epsilon }(\Gamma _{j})}\leq C\left\Vert f\right\Vert
_{\Omega _{s}}.  \label{22}
\end{equation}
\end{proposition}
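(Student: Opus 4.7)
The plan is to combine standard elliptic regularity with Grisvard's analysis of edge and vertex singularities for the Lamé system with homogeneous Dirichlet data on polyhedral domains.

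First, I would establish interior and face-interior regularity. Since $-\text{div}\,\sigma(\cdot)$ is strongly elliptic and the faces $\Gamma_j$ are flat (hence smooth in their relative interiors), classical elliptic regularity theory yields $z\in H^{2}(\Omega_s\setminus \mathcal{N})$ for any open neighborhood $\mathcal{N}$ of the union of edges and vertices of $\Omega_s$, with a bound controlled by $\|f\|_{\Omega_s}$. This takes care of the regularity away from the singular set.

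Next, I would invoke Grisvard's corner and edge analysis for the Lamé system on polyhedra, as collected in \cite{Grisvard}. Near each edge of $\Omega_s$ (perpendicular to which the problem locally resembles a 2D Dirichlet problem on a sector) and near each vertex (where one works on an infinite cone), the solution admits a decomposition into a regular $H^2$-part and a sum of canonical singular functions for the Dirichlet Lamé problem on the corresponding wedge or cone. Since the interior dihedral angles of $\Omega_s$ lie strictly within $(0,2\pi)$, the leading singular exponents are strictly greater than $\tfrac{1}{2}$; consequently each singular profile belongs to $H^{3/2+\epsilon}(\Omega_s)$ for some $\epsilon>0$ depending only on the geometry. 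Combining the regular part, the edge singularities, and the vertex singularities — and invoking the bound on the coefficients of the singular expansion in terms of $\|f\|_{\Omega_s}$ — one arrives at $z\in H^{3/2+\epsilon}(\Omega_s)$ with $\|z\|_{H^{3/2+\epsilon}(\Omega_s)}\leq C\|f\|_{\Omega_s}$.

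To deduce the boundary trace estimate, I would observe that $\sigma(z)$ depends linearly on $\nabla z\in H^{1/2+\epsilon}(\Omega_s)$. Since each face $\Gamma_j$ is flat, the standard trace theorem applies on the open face, giving $\nu\cdot\sigma(z)|_{\Gamma_j}\in H^{\epsilon}(\Gamma_j)$ with norm bounded by $\|\nabla z\|_{H^{1/2+\epsilon}(\Omega_s)}$, and hence by $\|f\|_{\Omega_s}$. Summing over $j=1,\dots,K$ produces the second term of (\ref{22}).

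The main obstacle is the delicate singularity analysis at reentrant edges and vertices: as a dihedral angle approaches $2\pi$, the admissible exponent $\epsilon$ is forced to shrink, and one needs Grisvard's precise description of the Lamé singular exponents to verify that they remain strictly above $\tfrac{1}{2}$ throughout $(0,2\pi)$. This is where the geometric hypothesis on $\Omega_s$ is used in an essential way, and where the value of $\epsilon$ is ultimately pinned down by the worst-case dihedral angle of the polyhedron.
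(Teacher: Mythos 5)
Your proposal is correct and follows essentially the same route as the paper: the $H^{3/2+\epsilon}(\Omega_s)$ regularity is exactly Grisvard's Theorem 4.5.1 for the Dirichlet Lam\'e problem on a polyhedron (which you sketch via the edge/vertex singular decomposition rather than citing as a black box), and the boundary estimate then follows from facewise trace theory applied to $\nabla z\in H^{\frac{1}{2}+\epsilon}(\Omega_s)$, which is what the paper does after writing out $\nu\cdot\sigma(z)|_{\Gamma_j}$ explicitly in a normal--tangential frame. No gaps.
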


\begin{proof}
The fact that $z\in H^{\frac{3}{2}+\epsilon }(\Omega _{s})$ comes
immediately from \cite[Theorem 4.5.1, p. 140, see also the remark on p. 149]{Grisvard}. (See also Theorem 4.5
of \cite{Nicaise}.) Moreover, given a point on boundary face $\Gamma _{j},$ let
unit (tangent) vectors $\tau _{1,}\tau _{2}$ --we neglect the index $j$ here--
be such that $\{\nu (x),\tau _{1}(x),\tau _{2}(x)\}$ constitutes an
orthonormal basis of $%
%TCIMACRO{\U{211d} }%
%BeginExpansion
\mathbb{R}
%EndExpansion
^{3}.$ Therewith, one can compute outright --see e.g., Proposition A.1 (iii)
of \cite{Avalos-Trigg}-- the expression for $j=1,2...,K:$%
\begin{eqnarray}
\nu \cdot \sigma (z)|_{\Gamma _{j}} &=&\lambda \left[ \frac{\partial z}{%
\partial \nu }\cdot \nu +\frac{\partial z}{\partial \tau _{1}}\cdot \tau
_{1}+\frac{\partial z}{\partial \tau _{2}}\cdot \tau _{2}\right] \nu  
\nonumber \\
&&+2\mu \frac{\partial z}{\partial \nu }+\mu \left[ (\frac{\partial w}{%
\partial \tau _{2}}\cdot \nu )-(\frac{\partial w}{\partial \nu }\cdot \tau
_{2})\right] \tau _{2}  \nonumber \\
&&+\mu \left[ (\frac{\partial w}{\partial \tau _{1}}\cdot \nu )-(\frac{%
\partial w}{\partial \nu }\cdot \tau _{1})\right] \tau _{1}.\text{ \ \ }
\label{23}
\end{eqnarray}%
To deal with RHS, we recall the known bounded Sobolev trace maps for a
polyhedron (see e.g., Theorem 6.9 (i), page 43 of \cite{Dauge-et-al}):%
\begin{eqnarray}
z &\in &H^{\frac{3}{2}+\epsilon }(\Omega _{s})\rightarrow z|_{\Gamma
_{j}}\in H^{1+\epsilon }(\Gamma _{j}),\text{ \ }\ j=1,2,...,K  \nonumber \\
z &\in &H^{\frac{3}{2}+\epsilon }(\Omega _{s})\rightarrow \frac{\partial z}{%
\partial \nu }|_{\Gamma _{j}}\in H^{\epsilon }(\Gamma _{j}),\text{ \ }%
j=1,2,...,K.  \label{24}
\end{eqnarray}%
Applying these maps to RHS of (\ref{23}), and invoking said continuous
mapping $f\rightarrow z$ in (\ref{22}) now completes the proof.
\end{proof}
\newpage
\noindent \textbf{Proof of Theorem \ref{st} - Strong Stability}

\bigskip \noindent Having established that the multi-layered structure-heat PDE
contraction semigroup $\{e^{\mathbf{A}t}\}_{t\geq 0}$ is c.n.u by Lemma \ref%
{cnu}, we can make use of the resolvent criterion given in Theorem \ref{C-T}. To this
end, we define the operator $A_{D}:D(A_{D})\subset L^{2}(\Omega
_{s})\rightarrow L^{2}(\Omega _{s})$ via 
\begin{equation}
\left\{ 
\begin{array}{c}
A_{D}f=-\text{div}\sigma (f) \\ 
D(A_{D})=\{f\in H_{0}^{1}(\Omega _{s}):-\text{div}\sigma (f)\in L^{2}(\Omega
_{s})\}.%
\end{array}%
\right.   \label{25}
\end{equation}%
With respect to this self-adjoint operator with compact inverse, we denote%
\begin{equation}
S\equiv \{\beta \in 
%TCIMACRO{\U{211d} }%
%BeginExpansion
\mathbb{R}
%EndExpansion
:\beta ^{2}\text{ is an eigenvalue of }A_{D}:D(A_{D})\subset L^{2}(\Omega
_{s})\rightarrow L^{2}(\Omega _{s})\}. \label{25.5}
\end{equation}%
In order to invoke the resolvent criterion in Theorem \ref{C-T}, we will
establish that the thick elastic-thin elastic-heat generator obeys the
strong limit (\ref{oniki-5}) for all $\beta \in 
%TCIMACRO{\U{211d} }%
%BeginExpansion
\mathbb{R}
%EndExpansion
\backslash (S\cup\{0\}).$ To this end, with $\alpha >0$ and \underline{fixed} $\beta
\in 
%TCIMACRO{\U{211d} }%
%BeginExpansion
\mathbb{R}
%EndExpansion
\backslash (S\cup\{0\}),$ we consider the resolvent equation%
\begin{equation}
\left[ (\alpha +i\beta )I-\mathbf{A}\right] \Phi =\Phi _{0}^{\ast }
\label{26}
\end{equation}%
where the solution $\Phi =\left[
u_{0},h_{01},h_{11},...,h_{0K},h_{1K},w_{0},w_{1}\right] \in D\mathbf{(A)}$
and the data \\ $\Phi _{0}^{\ast }=\left[ u_{0}^{\ast },h_{01}^{\ast
},h_{11}^{\ast },...,h_{0K}^{\ast },h_{1K}^{\ast },w_{0}^{\ast },w_{1}^{\ast
}\right] \in \mathbf{H.}$ From the definition of $D\mathbf{(A),}$ this
abstract equation can be written explicitly as 
\begin{equation}
(\alpha +i\beta )u_{0}-\Delta u_{0}=u_{0}^{\ast }\text{ \ \ in }\Omega _{f}%
\text{\ }  \label{27.a}
\end{equation}%
For $1\leq j\leq K:$%
\begin{equation}
\left\{ 
\begin{array}{c}
(\alpha +i\beta )h_{0j}-h_{1j}=h_{0j}^{\ast }\text{ \ \ in \ }\Gamma _{j} \\ 
(\alpha +i\beta )h_{1j}-\text{div}\sigma _{\Gamma _{s}}(h_{0j})+h_{0j}+\frac{%
\partial u_{0}}{\partial \nu }-\nu \cdot \sigma (w_{0})=h_{1j}^{\ast }\text{
\ \ in \ }\Gamma _{j}%
\end{array}%
\right.   \label{27.bc}
\end{equation}%
\begin{equation}
h_{0j}|_{\partial \Gamma _{j}\cap \partial \Gamma _{l}}=h_{0l}|_{\partial
\Gamma _{j}\cap \partial \Gamma _{l}}\text{ on \ }(\partial \Gamma _{j}\cap
\partial \Gamma _{l})\text{, for all }1\leq l\leq K\text{ such that }%
\partial \Gamma _{j}\cap \partial \Gamma _{l}\neq \emptyset   \label{27.d}
\end{equation}%
\begin{equation}
n_{j}\cdot \sigma _{\Gamma _{s}}(h_{0j})|_{\partial \Gamma _{j}\cap \partial
\Gamma _{l}}=-n_{l}\cdot \sigma _{\Gamma _{s}}(h_{0l})|_{\partial \Gamma
_{j}\cap \partial \Gamma _{l}}\text{on }(\partial \Gamma _{j}\cap \partial
\Gamma _{l}) \text{, for all }1\leq l\leq K\text{ such that }\partial \Gamma
_{j}\cap \partial \Gamma _{l}\neq \emptyset   \label{27.e}
\end{equation}%
\begin{equation}
w_{1}=(\alpha +i\beta )w_{0}-w_{0}^{\ast }\text{ \ in }\Omega _{s}\text{\ }
\label{27.f}
\end{equation}%
\begin{equation}
-\beta ^{2}w_{0}-\text{div}\sigma (w_{0})=-(\alpha ^{2}+2i\alpha \beta
)w_{0}+(\alpha +i\beta )w_{0}^{\ast }+w_{1}^{\ast }\text{ \ \ in }\Omega _{s}
\label{27.g}
\end{equation}%
\begin{equation}
\lbrack (\alpha +i\beta )w_{0}-w_{0}^{\ast }]|_{\Gamma
_{j}}=h_{1j}=u_{0}|_{\Gamma _{j}}\text{ \ on \ }\Gamma _{j}.  \label{27.h}
\end{equation}%
Throughout, take $0<\alpha<M_0,$ for some positive constant $M_0,$ and we will give the proof in the following steps: \newline \newline
\textbf{STEP I }\textbf{(A static dissipation relation):} Taking the $(\cdot ,\cdot )$%
-inner product of both sides of (\ref{26}) with respect to $\Phi ,$ we have%
\[
\alpha \left\Vert \Phi \right\Vert _{\mathbf{H}}^{2}+i\beta \left\Vert \Phi
\right\Vert _{\mathbf{H}}^{2}-(\mathbf{A}\Phi ,\Phi )_{\mathbf{H}}=(\Phi _{0}^{\ast },\Phi )_{\mathbf{H}}.
\]
Proceeding as in the proof of Proposition \ref{diss}, we obtain%
\begin{eqnarray*}
\alpha \left\Vert \Phi \right\Vert _{\mathbf{H}}^{2}+i\beta \left\Vert \Phi
\right\Vert _{\mathbf{H}}^{2}+\left\Vert \nabla u_{0}\right\Vert _{\Omega
_{f}}^{2}&=&(\Phi
_{0}^{\ast },\Phi )_{\mathbf{H}}+2i\sum\limits_{j=1}^{K}\text{Im}(\sigma _{\Gamma
_{s}}(h_{1j}),\epsilon _{\Gamma _{s}}({h}_{0j}))_{\Gamma
_{j}}\\
&&+2i\sum\limits_{j=1}^{K}\text{Im}(h_{1j},{h}_{0j})_{\Gamma _{j}}+2i\text{Im}(
\sigma(w_{1}),\epsilon({w}_{0}))_{\Omega _{s}}\end{eqnarray*}%
or%
\begin{equation}
\alpha \left\Vert \Phi \right\Vert _{\mathbf{H}}^{2}+\left\Vert \nabla
u_{0}\right\Vert _{H}^{2}=\text{Re}(\Phi _{0}^{\ast },\Phi )_{\mathbf{H}}.
\label{28}
\end{equation}%
Invoking the boundary conditions (\ref{27.h}) and the Sobolev Trace Theorem,
we then have for $1\leq j\leq K$ 
\begin{equation}
\lbrack (\alpha +i\beta )w_{0}-w_{0}^{\ast }]|_{\Gamma
_{j}}=h_{1j}=u_{0}|_{\Gamma _{j}}=\sqrt {\mathcal{O}\left( \left\vert (\Phi
_{0}^{\ast },\Phi )_{\mathbf{H}}\right\vert \right)} \text{ \ on \ }\Gamma
_{j}.  \label{29}
\end{equation}%
\newline
\textbf{STEP II} \textbf{(An estimate for the thick elastic displacement):} We start
here by defining the ``Dirichlet" map $D:L^{2}(\Omega _{s})\rightarrow
L^{2}(\Omega _{s})$ via%
\begin{equation}
Dg=v\Longleftrightarrow \left\{ 
\begin{array}{c}
\text{div}\sigma (v)=0\text{ \ \ in \ }\Omega _{s} \\ 
v=g\text{ \ \ on \ }\Gamma _{s}%
\end{array}%
\right.   \label{29.5}
\end{equation}%
By the Lax-Milgram Theorem, and a subsequent integration by parts with respect to (\ref{29.5}), we have%
\begin{equation}
D\in \mathcal{L}(H^{\frac{1}{2}}(\Gamma _{s}),H^{1}(\Omega _{s})); \text{ \ \ \ \ \ } \nu \cdot \sigma(D(\cdot))\in \mathcal{L}(H^{\frac{1}{2}}(\Gamma _{s}),H^{-\frac{1}{2}}(\Gamma _{s})).
\label{29.6}
\end{equation}%
With this mapping, if we now let 
\begin{equation}
z=w_{0}-\frac{i}{\beta }D([\alpha w_{0}-u_{0}-w_{0}^{\ast }]|_{\Gamma _{s}}),
\label{29.7}
\end{equation}%
then from (\ref{27.g}), we have that $z$ solves the BVP:%
\[
\left\{ 
\begin{array}{c}
-\beta ^{2}z-\text{div}\sigma (z)=-i\beta D([\alpha w_{0}-u_{0}-w_{0}^{\ast
}]|_{\Gamma _{s}})-(\alpha ^{2}+2i\alpha \beta )w_{0}+(\alpha +i\beta )w_{0}^{\ast
}+w_{1}^{\ast }\text{ \ \ in }\Omega _{s} \\ 
 \\
z=0\text{ \ \ \ on \ }\Gamma _{s}.\text{\ }%
\end{array}%
\right. 
\]%
Since $\beta ^{2}$ is not an eigenvalue of $A_{D}$ (defined in (\ref{25})),
we then have%
\[
z=(\beta ^{2}-A_{D})^{-1} \Big[ i\beta D([\alpha w_{0}-u_{0}-w_{0}^{\ast }]|_{\Gamma _{s}}) +(\alpha ^{2}+2i\alpha \beta )w_{0}-(\alpha +i\beta )w_{0}^{\ast
}-w_{1}^{\ast }\Big] \text{ \ \ in }\Omega _{s}.\] 
Estimating RHS by means of (\ref{28}), (\ref{29.6}), (\ref{29}) and the Sobolev Trace
Theorem, we then have %
\begin{equation}
\left\Vert z\right\Vert _{\Omega _{s}}=\mathcal{O}\left( \sqrt{\left\vert (\Phi
_{0}^{\ast },\Phi )_{\mathbf{H}}\right\vert }+\left\Vert \Phi _{0}^{\ast
}\right\Vert _{\mathbf{H}}\right) .  \label{30}
\end{equation}%
In turn, by the higher regularity result in Proposition \ref{reg}, we have%
\begin{eqnarray*}
&&\left\Vert z\right\Vert _{H^{\frac{3}{2}+\epsilon }(\Omega
_{s})}+\sum\limits_{j=1}^{K}\left\Vert \nu \cdot \sigma (z)|_{\Gamma
_{j}}\right\Vert _{H^{\epsilon }(\Gamma _{j})} \\
&\leq &C\left\Vert \beta ^{2}z-i\beta D([\alpha w_{0}-u_{0}-w_{0}^{\ast }]|_{\Gamma
_{s}})-(\alpha ^{2}+2i\alpha \beta )w_{0}+(\alpha +i\beta )w_{0}^{\ast
}+w_{1}^{\ast }\right\Vert _{\Omega _{s}}.
\end{eqnarray*}%
Appealing to estimate (\ref{22}) and (\ref{30}) (and once more (\ref{28}), (\ref{29.6}), (\ref{29}) and
Sobolev Trace Theory), we have%
\begin{equation}
\left\Vert z\right\Vert _{H^{\frac{3}{2}+\epsilon }(\Omega
_{s})}+\sum\limits_{j=1}^{K}\left\Vert \nu \cdot \sigma (z)|_{\Gamma
_{j}}\right\Vert _{H^{\epsilon }(\Gamma _{j})}\leq \mathcal{O}\left( \sqrt{%
\left\vert (\Phi _{0}^{\ast },\Phi )_{\mathbf{H}}\right\vert }+\left\Vert
\Phi _{0}^{\ast }\right\Vert _{\mathbf{H}}\right).   \label{31}
\end{equation}%
Now, invoking the decomposition%
\[
w_{0}=z+\frac{i}{\beta }D([\alpha w_{0}-u_{0}-w_{0}^{\ast }]|_{\Gamma _{s}}),
\]%
we combine (\ref{28}), (\ref{31}), (\ref{29.6}) and (\ref{29}) to
have %
\begin{equation}
\left\Vert w_{0}\right\Vert _{H^{1}(\Omega
_{s})}+\sum\limits_{j=1}^{K}\left\Vert \nu \cdot \sigma (w_{0})|_{\Gamma
_{j}}\right\Vert _{H^{-\frac{1}{2}}(\Gamma _{s})}\leq \mathcal{O}\left( 
\sqrt{%
\left\vert (\Phi _{0}^{\ast },\Phi )_{\mathbf{H}}\right\vert }+\left\Vert
\Phi _{0}^{\ast }\right\Vert _{\mathbf{H}}\right).   \label{32}
\end{equation}%
\newline
\textbf{STEP III} \textbf{(An estimate for the thin elastic displacement): }For $%
1\leq j\leq K,$ we multiply both sides of the thin elastic equation in (\ref%
{27.bc})$_{2}$ by $h_{0j},$ followed by an integration over $\Gamma _{j}$,
and integration by parts. Summing the resulting expressions gives%
\[
\sum\limits_{j=1}^{K}\left[ (\sigma _{\Gamma _{s}}(h_{0j}),\epsilon _{\Gamma
_{s}}(h_{0j}))_{\Gamma _{j}}+\left\Vert h_{0j}\right\Vert ^{2}{}_{\Gamma
_{j}}\right] -\cancel {\sum\limits_{j=1}^{K}\left\langle n_{j}\cdot \sigma _{\Gamma
_{s}}(h_{0j}),h_{0j}\right\rangle _{\Gamma _{j}}}
\]%
\begin{equation}
=-(\alpha+i\beta)(u_{0},w_{0})_{\Omega _{s}}+\left\langle \nu \cdot \sigma(w_{0})  
,w_{0}\right\rangle _{\Gamma _{s}}-\left\langle \frac{\partial u_{0}}{%
\partial \nu },w_{0}\right\rangle _{\Gamma _{s}}+\sum\limits_{j=1}^{K} (h_{1j}^{*},h_{0j})_{\Gamma _{j}}.  \label{33}
\end{equation}%
(Here, in canceling the thin layer boundary terms on $\partial \Gamma _{j},$
we are invoking the boundary conditions in (\ref{27.d})-(\ref{27.e}).) Now,
with respect to the normal derivative $\frac{\partial u_{0}}{\partial \nu }%
|_{\Gamma _{s}}$ of the thermal component, we can integrate by parts so as
to deduce the trace estimate%
\begin{eqnarray*}
\left\Vert \frac{\partial u_{0}}{\partial \nu }\right\Vert _{H^{-\frac{1}{2}%
}(\Gamma _{s})} &\leq &C\left[ \left\Vert \nabla u_{0}\right\Vert _{\Omega
_{f}}+\left\Vert \Delta u_{0}\right\Vert _{\Omega _{f}}\right]  \\
&=&C\left[ \left\Vert \nabla u_{0}\right\Vert _{\Omega _{f}}+\left\Vert
(\alpha +i\beta )u_{0}-u_{0}^{\ast }\right\Vert _{\Omega _{f}}\right].
\end{eqnarray*}%
Invoking the estimate (\ref{28}) now gives%
\begin{equation}
\left\Vert \frac{\partial u_{0}}{\partial \nu }\right\Vert _{H^{-\frac{1}{2}%
}(\Gamma _{s})}=\mathcal{O}\left( \sqrt{\left\vert (\Phi _{0}^{\ast },\Phi
)_{\mathbf{H}}\right\vert }+\left\Vert \Phi _{0}^{\ast }\right\Vert _{%
\mathbf{H}}\right).   \label{34}
\end{equation}%
Applying this estimate, along with relation (\ref{28}), and (\ref{32}) for $%
\{w_{0},\nu \cdot \sigma(w_{0})|_{\Gamma _{s}}\}$ (and using
again the Sobolev Trace Theorem), we get%
\begin{equation}
\sqrt{\sum\limits_{j=1}^{K}\left[ (\sigma _{\Gamma _{s}}(h_{0j}),\epsilon
_{\Gamma _{s}}(h_{0j}))_{\Gamma _{j}}+\left\Vert h_{0j}\right\Vert
^{2}{}_{\Gamma _{j}}\right] }=\mathcal{O}\left(\sqrt{\left\Vert \Phi _{0}^{\ast }\right\Vert _{%
\mathbf{H}}\left\Vert \Phi \right\Vert _{%
\mathbf{H}}}+\sqrt{\left\vert (\Phi _{0}^{\ast
},\Phi )_{\mathbf{H}}\right\vert }+\left\Vert \Phi _{0}^{\ast }\right\Vert _{%
\mathbf{H}}\right)   \label{35}
\end{equation}%
Moreover, applying (\ref{28}) and the estimate (\ref{32}) to the resolvent relation (\ref%
{27.f}), we have %
\begin{equation}
\left\Vert w_{1}\right\Vert _{\Omega _{s}}=\mathcal{O}\left( \sqrt{\left\vert (\Phi
_{0}^{\ast },\Phi )_{\mathbf{H}}\right\vert }+\left\Vert \Phi _{0}^{\ast
}\right\Vert _{\mathbf{H}}\right) .  \label{36}
\end{equation}%
Now, if we combine (\ref{28}), (\ref{32}), (\ref{35}) and (\ref{36}), we
then have%
\begin{eqnarray*}
\left\Vert \Phi \right\Vert _{\mathbf{H}} &\leq &C_{0,\beta }\left( \sqrt{\left\Vert \Phi _{0}^{\ast }\right\Vert _{%
\mathbf{H}}\left\Vert \Phi \right\Vert _{%
\mathbf{H}}}+\sqrt{\left\vert
(\Phi _{0}^{\ast },\Phi )_{\mathbf{H}}\right\vert }+\left\Vert \Phi
_{0}^{\ast }\right\Vert _{\mathbf{H}}\right). 
\end{eqnarray*}%
Invoking finally $|ab|\leq \delta a^2+C_{\delta}b^2$ for $\delta>0$, we arrive at (after rescaling $\delta>0$),
$$(1-C_{\beta})\left\Vert \Phi \right\Vert _{\mathbf{H}}\leq C_{\beta ,\delta }\left\Vert \Phi
_{0}^{\ast }\right\Vert _{\mathbf{H}}.$$ 
This gives the required strong limit in (\ref{oniki-5}) for all $\Phi
_{0}^{\ast }\in $ $\mathbf{H}$ and all $\beta \in 
%TCIMACRO{\U{211d} }%
%BeginExpansion
\mathbb{R}
%EndExpansion
\backslash (S\cup \{0\}).$ This completes the proof of Theorem \ref{st}.

\section{Acknowledgement}

The authors G. Avalos and Pelin G. Geredeli would like to thank the National
Science Foundation, and acknowledge their partial funding from NSF Grant
DMS-1907823.

\noindent

\end{document}